\newtheorem{proposition}{Proposition}
\newtheorem{corollary}[proposition]{Corollary}
\newtheorem{lemma}[proposition]{Lemma}
\newtheorem{theorem}[proposition]{Theorem}
\newtheorem*{conjecture*}{Conjecture}
\newtheorem*{theorem*}{Theorem}
\newtheorem*{corollary*}{Corollary}
\newtheorem*{proposition*}{Proposition}
\newtheorem*{lemma*}{Lemma}
\theoremstyle{definition}
\newtheorem*{definition*}{Definition}
\newtheorem*{construction*}{Construction}
\theoremstyle{remark}
\newtheorem{remark}[proposition]{Remark}
\newtheorem*{remark*}{Remark}
\newtheorem*{example*}{Example}
\let\scr=\mathcal
\let\bb=\mathbb
\def\A{\bb A}
\newcommand{\SH}{\mathcal{SH}}
\let\lim=\relax
\DeclareMathOperator*{\lim}{lim}
\def\Fun{\mathrm{Fun}}
\newcommand{\wequi}{\simeq}
\DeclareRobustCommand{\ul}{\underline}
\def\op{\mathrm{op}}
\let\cat=\mathrm
\def\Sm{{\cat{S}\mathrm{m}}}
\def\Nis{\mathrm{Nis}}
\def\mot{\mathrm{mot}}
\numberwithin{proposition}{section}
\numberwithin{equation}{section}
\renewcommand{\todo}[1]{}
\newcommand{\NB}[1]{}
\newcommand{\NB}[1]{\todo[color=gray!40]{#1}}
\newcommand{\Shv}{\mathcal{S}\mathrm{hv}}
\title{A $C_2$-equivariant Gabber presentation lemma}
\author{Tom Bachmann}
\begin{document}
\maketitle

\section{Introduction}
In \cite[\S3]{gabber-presentation-lemma}, O. Gabber established a certain technical ``presentation lemma'' which he used to prove exactness of so-called Gersten complexes.
This innocuous looking statement has since become a cornerstone of motivic homotopy theory over fields \cite{A1-alg-top}.
Much energy has been expended into generalizing it---it is now known also over finite fields \cite{gabber-lemma-finite-fields} and, with an additional ``shift'', also over much more general bases \cite{schmidt2018stable,gabber-lemma-noetherian-domains}.

In this short note we establish a variant of Gabber's lemma in the setting of varieties with an action by the group $C_2$ of order two.
From this we deduce a modified form of Gersten injectivity.\footnote{It is well-known that Gersten injectivity in its usual form does not hold equivariantly, see e.g. \cite[Remark 6.3]{rigidity-equiv-pseudopretheories}.}

Using these results, we extend some of the standard properties of motivic homotopy theory over a field to the $C_2$-equivariant setting (stable connectivity, homotopy $t$-structure).
We only give some of the most immediate applications, leaving many extensions to future work.
This is because I view the results as somewhat ``experimental'', ``a first foray'' into equivariant Gabber lemmas.
Similar results should be provable for many other groups, and the good consequences for motivic homotopy theory should then follow for all these groups.

I leave it to someone with more geometric clout than myself to tackle these issues.


\subsection*{Notation}
We denote by $\A^\sigma$ the affine line with $C_2$-action given by the sign representation, and also by $\sigma \in C_2$ the generator.

\subsection*{Acknowledgements}
I would like to thank Marc Levine and Charanya Ravi for very helpful discussions.

\section{Presentation lemma}
Our main result is the following.
It is a generalization in both statement and proof of \cite[Lemma 3.1, Remark 3.7]{gabber-presentation-lemma}.

\begin{theorem} \label{thm:main}
Let $k$ be an infinite field of characteristic $\ne 2$.
Let $X$ be a smooth $k$-scheme with an action by $C_2$.
Let $Z \subset X$ be a closed invariant subscheme of positive codimension and $z \in Z$ be a point (not necessarily closed).
Then there exist:
\begin{itemize}
\item A $C_2$-equivariant étale neighborhood $X' \to X$ of $C_2z$.
\item A smooth $C_2$-scheme $W$.
\item A $C_2$-equivariant étale neighborhood $X' \to \A^1_W$ of $Z_{X'}$.
  Here $\A^1$ carries either the trivial or the sign action.
\end{itemize}

Moreover the following hold:
\begin{enumerate}
\item The composite $Z_{X'} \to \A^1_W \to W$ is finite.
\item If the action on $\A^1_W$ is the sign action, then $X' \to W$ admits a section.
\end{enumerate}
\end{theorem}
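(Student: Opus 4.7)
The approach is to adapt the Morel--Voevodsky proof of the classical Gabber presentation lemma to the $C_2$-equivariant setting by working throughout with equivariant linear projections. First, I would reduce to the case where $X$ is affine, by replacing it with a $C_2$-invariant affine open neighborhood of the orbit $C_2 z$, and embed $X$ equivariantly as a closed subscheme of $\A^V$ for some finite-dimensional $C_2$-representation $V = k^{\oplus m} \oplus (\A^\sigma)^{\oplus n}$. Such an embedding exists because the coordinate ring $\mathcal{O}_X(X)$ is generated over $k$ by finitely many elements which we may arrange to be invariant or anti-invariant. I would then form an equivariant projective compactification $\bar X \subset \P(V \oplus k)$ and let $\bar Z$ and $B = \bar X \setminus X$ denote the closure of $Z$ and the boundary.

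The main step is to choose a $C_2$-equivariant linear projection of $\A^V$ onto an equivariant quotient of dimension $\dim X$; splitting this quotient as $\A^1 \times \A^W$ for a 1-dimensional invariant line (either trivial or sign), we want (a) the restriction to $X$ is étale at both $z$ and $\sigma z$, and (b) the center of projection $L \subset \P(V \oplus k)$ is disjoint from $\bar Z$ and $B$, which ensures $\bar Z \to \A^W$ is proper and hence finite. Both conditions are Zariski-open on the space of equivariant projections, and this space of projections decomposes as a product over the trivial and sign isotypical components of $V$. Provided $V$ has sufficiently many trivial and sign summands---which we arrange by enlarging the embedding as needed---generic choice is possible, and since $k$ is infinite, $k$-rational points are dense in the relevant Zariski-open locus.

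For the choice of action and for clause (2): if $C_2 z$ is a free orbit, any equivariant 1-dim subspace works and I would take the $\A^1$ factor to carry the trivial action. If $z \in X^{C_2}$, I would choose the $\A^1$ factor to be $\A^\sigma$, so that $\pi(z)$ lies in the fixed locus $W \times \{0\} \subset \A^\sigma_W$. The zero section $W \hookrightarrow \A^\sigma_W$ then lifts, after shrinking $X'$, to an equivariant closed immersion $W \hookrightarrow X'$ passing through the chosen preimage of $z$: the étale map $X' \to \A^\sigma_W$ has matching residue fields at $z$ and $\pi(z)$, so it admits a local section over the closed subscheme $W \subset \A^\sigma_W$ through that point, yielding the desired section of $X' \to W$.

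The main obstacle I anticipate is the equivariant genericity argument. Since $L$ must be a $C_2$-invariant linear subspace of $\P(V \oplus k)$, the usual dimension count ensuring that $L$ avoids $\bar Z$ and that the induced projection is étale on $X$ has to be redone separately in the trivial and sign directions. In particular, at a fixed point $z$ the tangent space $T_z X$ carries its own $C_2$-decomposition that must be compatible with the projection, and one must verify that enlarging the embedding by sufficiently many equivariant summands always gives enough room to find a generic equivariant projection meeting all of (a), (b), and the compatibility with the chosen action on $\A^1$.
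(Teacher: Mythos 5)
Your overall framework---keep $W$ a linear $C_2$-representation (an equivariant quotient of the embedding space) and choose a generic equivariant projection---cannot prove the theorem, and the failure is exactly the phenomenon the paper isolates in its Remark 2.4(1). Test it on $X$ a smooth affine surface with involution whose fixed locus $Z=X^{C_2}$ is a smooth affine curve of positive genus (e.g.\ $X=\{y^2=f(x_1,x_2)\}$ with $\{f=0\}$ smooth of positive genus, $\sigma(y)=-y$), and $z\in Z$ a fixed point. Étaleness at $z$ forces $\A^1\times\A^W\cong T_zX\cong k\oplus k^\sigma$ as representations, so there are only two options. If $\A^1=\A^\sigma$ and $\A^W$ is trivial, then $Z_{X'}\subset (X')^{C_2}$ must map into $(\A^\sigma_{\A^W})^{C_2}=\{0\}\times\A^1$, and the étale-neighborhood condition requires $Z_{X'}\to\A^1_W$ to be a closed immersion; a one-dimensional closed subscheme of $\A^1$ has components isomorphic to $\A^1$, which is impossible since $Z_{X'}$ is étale over a positive-genus curve. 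If instead $\A^1$ is trivial and $\A^W=\A^\sigma$, then $Z_{X'}$ lands in $\A^1\times\{0\}$, so the composite $Z_{X'}\to\A^W$ is constant and cannot be finite, contradicting clause (1) (and the closed-immersion requirement fails for the same genus reason). So no equivariant linear projection, however generic and however large you make the embedding, can satisfy the conclusion when the base of the $\A^1$-bundle is an affine space: the sign direction that classically gives you ``room'' to turn a finite map into a closed immersion is exactly the direction that equivariance collapses on the fixed locus.

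The paper's proof escapes this by \emph{not} taking $W$ to be (a neighborhood in) the target affine space. It projects $X\to\A^{n+m\sigma}\times\A^\sigma$ using one extra anti-invariant coordinate $g_{m+1}$, defines $W\subset X$ as the vanishing locus of $g_{m+1}$ (a smooth closed subscheme, étale over $\A^{n+m\sigma}$, containing $X^{C_2}$ and hence $z$), and then pulls the whole situation back along $W\to\A^{n+m\sigma}$ (after henselizing at $z$ and spreading out); this makes $(\A^\sigma_W)^{C_2}=W^{C_2}$ large enough to receive $Z_{X'}$, and the section of $X'\to W$ in clause (2) comes for free from $W\subset X$. Your proposal contains no mechanism of this kind. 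Separately, but less importantly: even where your strategy could be made to work, your justification of clause (2) is too quick---an étale map does not admit a section over a closed subscheme merely because residue fields match at one point; one needs the henselization of $W$ and a limit/spreading-out argument replacing $W$ by an étale neighborhood, as in the paper's Steps 4--5. You would also still need to arrange that $Z_{X'}\to\A^1_W$ is a closed immersion and that $X'$ contains no other points of $\varphi^{-1}(\varphi(Z_{X'}))$, which your outline does not address; and the free-orbit case over a non-perfect field requires its own argument (trivial action on $\A^1$, descent to the quotient), since $z$ need not be closed.
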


\begin{remark}
The existence of the lift of $W$ in the case of a non-trivial action on $\A^1$ is important in the applications.
Luckily this also naturally arises in the proof.
\end{remark}

\begin{remark}
To illustrate the challenges in proving Theorem \ref{thm:main}, I propose to discuss some special cases.
\begin{enumerate}
\item Suppose $Z = X^{C_2}$ is smooth of dimension $n$, codimension $1$ in $X$.
  Since $Z_{X'} \to W$ factors through $W^{C_2}$ and is finite, we find for dimension reasons that $W = W^{C_2}$ has trivial action.
  Since the action on $X$ is non-trivial, we see that $\A^1$ must carry the sign action in this case.
  Now $Z_{X'} \to \A^1_W$ factors through $(\A^1_W)^{C_2} = W$, and so we conclude that $Z_{X'} \to W$ is a clopen immersion.

  Gabber's proof of his lemma starts by constructing a map $X \to \A^n$ with finite restriction to $Z$, and then uses the remaining coordinate to convert the finite map into a closed immersion.
  This cannot work in our case (unless $Z$ is isomorphic to an open subset of $\A^n$).
  Our method is to instead pull back the map to $\A^n$ along $W \to \A^n$; this will yield an étale neighborhood of $z$ since $z \in W$.

\item Now suppose in contrast that $z \not\in X^{C_2}$.
  If we attempt to mimic the same strategy, we must find a smooth closed subscheme $W \subset X$ of codimension $1$ containing $z$.
  This is not possible in general unless $k$ is perfect (because $z$ may be a point of codimension $1$ with closure which is not generically smooth.)
  We must thus use a somewhat different argument, and construct a map to $\A^1_W$ where $\A^1$ has the trivial action.
\end{enumerate}
\end{remark}

We now proceed with the proof.
\begin{lemma} \label{lemm:functions}
Let $X$ be an affine finite type $k$-scheme, $k$ an infinite field.
Let $S \subset X$ be a finite set of closed points.
There exists a function $f: X \to \A^1$ inducing a universal injection $S \hookrightarrow \A^1$.

More generally, suppose given $T \subset X$ another finite set of closed points, disjoint from $S$.
Let $X \hookrightarrow \A^N$ be an embedding.
Suppose that after geometric base change, $T$ has size $n$.
Among the set of polynomial functions on $X$ of degree $\le n+1$ (with respect to the embedding $X \hookrightarrow \A^N$) vanishing on $T$, those which induce a universal injection $S \hookrightarrow \A^1 \setminus 0$ are dense.
\end{lemma}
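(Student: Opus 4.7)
The plan for both parts is identical at the conceptual level: identify a finite-dimensional $k$-vector space $V$ of candidate functions, express the target conditions as Zariski-open subsets of $V \otimes_k \bar k$, verify they are nonempty (hence Zariski-dense, since $V \otimes_k \bar k$ is an affine space and in particular irreducible), observe that the conjunction of all these conditions is Galois-invariant and hence descends to a nonempty Zariski-open subset $U_0 \subset V$, and finally use that $k$ is infinite to conclude that $U_0(k)$ is Zariski-dense in $V$. Write $\bar S$ and $\bar T$ for the underlying finite sets of closed points of $S_{\bar k}$ and $T_{\bar k}$.

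For the first assertion, take $V = V_d$ to be the $k$-vector space of restrictions to $X$ of degree-$\le d$ polynomials on $\A^N$, for some embedding $X \hookrightarrow \A^N$ and $d$ chosen large enough that $V_d \otimes_k \bar k$ surjects onto $\prod_{p \in \bar S} \bar k$ under pointwise evaluation (such $d$ exists, e.g.\ $d$ equal to the cardinality of $\bar S$ suffices via products of linear forms). For each ordered pair $p \ne q$ in $\bar S$, the condition $f(p) \ne f(q)$ is a nonempty principal open $U_{p,q} \subset V_d \otimes_k \bar k$. The Galois action permutes the pairs, so $\bigcap_{p \ne q} U_{p,q}$ is Galois-invariant, nonempty, and descends to a nonempty open $U_0 \subset V_d$, yielding the desired $f$.

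For the second assertion, let $V$ be the $k$-subspace of degree-$\le n+1$ restrictions vanishing on $T$. The needed Zariski-open conditions on $V \otimes_k \bar k$ are (a) nonvanishing at each point of $\bar S$ and (b) separation of each pair of distinct points of $\bar S$. To see they are nonempty: for (a), given $s \in \bar S$, since $\bar T$ has $n$ points all distinct from $s$, for each $t \in \bar T$ choose a linear form vanishing at $t$ but not at $s$; the product is a degree-$n$ polynomial vanishing on $\bar T$ and nonzero at $s$. For (b), given distinct $s, s' \in \bar S$, apply the same construction to the $(n+1)$-point set $\bar T \cup \{s'\}$, producing a degree-$(n+1)$ polynomial vanishing on $\bar T \cup \{s'\}$ but not at $s$, hence separating $s$ from $s'$. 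Galois descent and infinitude of $k$ then yield the claimed density of good $f \in V$.

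The argument is essentially routine; the one point that demands care is the accounting of the degree bound $n+1$ against the \emph{geometric} size $n$ of $T$, which gives exactly enough room to vanish on $T \cup \{s'\}$ while leaving a chosen $s \in \bar S$ unconstrained. Thus the stated degree bound is tight, and any smaller bound would fail the separation condition (b).
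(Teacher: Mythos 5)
Your proof is correct and follows essentially the same route as the paper: express the conditions (nonvanishing at, and separation of, the points of $S$) as open conditions on the affine space of degree-$\le n+1$ polynomials vanishing on $T$, check nonemptiness after base change to $\bar k$ by taking products of linear forms (temporarily adjoining $s'$ to $T$), and conclude density from the infinitude of $k$. The only quibble is your closing assertion that the bound $n+1$ is tight --- that claim is neither needed for the lemma nor true in general, since for special configurations polynomials of lower degree may already vanish on $T$ and separate $S$.
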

\begin{proof}
The first claim follows from the second one by choosing an embedding $X \hookrightarrow \A^N$ and setting $T = \emptyset$ (and using that open dense subsets of affine spaces have points over $k$).

The desired condition (injectivity after geometric base change) is open.
To prove it can be satisfied, we may thus assume that $k$ is algebraically closed.
We must find a polynomial $P$ of degree $\le n+1$ vanishing on $T$ satisfying that: for every $s \in S$ we have $P(s) \ne 0$, and for $s' \ne s \in S$ we have $P(s) \ne P(s')$.
Each of these conditions is open, so we may satisfy them separately.
By temporarily adding $s'$ to $T$, it will suffice to show: there exists a polynomial $P$ of degree $\le |T|$ vanishing at $T$ but not at $s$.
Taking a product over terms for each $t \in T$, it suffices to find a linear polynomial vanishing at some specific $t \in T$, but not at $s$.
Since $t \ne s$ this is clearly possible.
\end{proof}

\begin{lemma} \label{lemm:functionsC2}
Let $X$ be an affine finite type $k$-scheme with a $C_2$-action, $k$ an infinite field of characteristic $\ne 2$.
Let $S \subset X$ be a finite set of closed points.
\begin{enumerate}
\item For $x \in X^{C_2} \setminus S$ there exists an equivariant map $f: X \to \A^1$ (where $\A^1$ has the trivial action) with $f(x) \not\in f(S)$.
\item If $S \subset X \setminus X^{C_2}$ then there exists an equivariant map $f: X \to \A^\sigma$ (where $\A^\sigma$ denotes $\A^1$ with the sign action) with $f(s) \ne 0$ for $s \in S$.
\item Suppose $S \subset X \setminus X^{C_2}$ and $T \subset X$ is another finite set of closed points, disjoint from $S \cup \sigma S$.
  There exists $f: X \to \A^\sigma$ vanishing on $T$ and non-vanishing on $S$.
\end{enumerate}
\end{lemma}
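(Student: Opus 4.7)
The key observation is that since $\mathrm{char}\, k \ne 2$, the operators $(1 \pm \sigma^*)/2$ project $\mathcal{O}(X)$ onto the $\pm 1$-eigenspaces of $\sigma^*$; equivariant maps $X \to \A^1$ (trivial action) correspond to invariant functions, and equivariant maps $X \to \A^\sigma$ to anti-invariant ones (those $g$ with $\sigma^* g = -g$). The plan is to apply Lemma \ref{lemm:functions} to build an auxiliary $f_0 \in \mathcal{O}(X)$ with suitably prescribed values and then average to land in the desired eigenspace.

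For (2), I would apply Lemma \ref{lemm:functions} to the set $S \cup \sigma S$, which has $2|S|$ distinct closed points because $S \cap X^{C_2} = \emptyset$ forces $s \ne \sigma s$; this gives $f_0$ with $f_0(s) \ne f_0(\sigma s)$ for each $s \in S$, so that $f := (f_0 - \sigma^* f_0)/2$ is anti-invariant and $f(s) \ne 0$. For (3), I would note that $T$ being disjoint from $S \cup \sigma S$ implies, by applying $\sigma$, that $T \cup \sigma T$ is still disjoint from $S \cup \sigma S$; then the second clause of Lemma \ref{lemm:functions} applied with these enlarged sets produces an $f_0$ vanishing on $T \cup \sigma T$ and distinguishing the points of $S \cup \sigma S$, and the same averaging $f = (f_0 - \sigma^* f_0)/2$ both vanishes on $T$ (since $f_0$ kills $\sigma T$ as well) and is non-zero on $S$.

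For (1), the naive averaging strategy fails in an illustrative way: if $f_0$ is injective on $\{x\} \cup S \cup \sigma S$ and one sets $f = (f_0 + \sigma^* f_0)/2$, then $f(x) = f_0(x)$ but $f(s) = (f_0(s) + f_0(\sigma s))/2$, and these can easily coincide even for injective $f_0$. I would instead pass to the GIT quotient $Y = \Spec \mathcal{O}(X)^{C_2}$, which is an affine finite-type $k$-scheme (since $C_2$ is finite) along which $\pi: X \to Y$ is finite. Because $x \in X^{C_2}$ is fixed and $x \notin S$, the orbit $\{x\}$ is disjoint from each orbit $\{s_i, \sigma s_i\}$, so the images $\pi(x), \pi(s_1), \ldots, \pi(s_n)$ are distinct closed points of $Y$; applying the first clause of Lemma \ref{lemm:functions} on $Y$ separates them, and pulling back yields the required invariant function on $X$. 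The main obstacle is precisely this asymmetry in (1), which one sidesteps by taking a quotient rather than forcing the averaging trick.
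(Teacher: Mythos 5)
Your constructions are the same as the paper's: part (1) is proved by descending to the quotient $X/C_2=\Spec(\mathcal{O}(X)^{C_2})$ and applying Lemma \ref{lemm:functions} there, and parts (2) and (3) by antisymmetrizing a function supplied by Lemma \ref{lemm:functions} on the $C_2$-saturated point sets. In (3) your packaging is if anything a little cleaner: the paper parametrizes polynomials vanishing only on $T$ and checks nonemptiness after geometric base change via a splitting $S\cup\sigma S=S'\amalg\sigma S'$, whereas applying the second clause of Lemma \ref{lemm:functions} directly to $T\cup\sigma T$ and $S\cup\sigma S$ makes the vanishing of $f=(f_0-\sigma^*f_0)/2$ on a not-necessarily-invariant $T$ transparent.

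One justification, however, is wrong as stated: $S\cap X^{C_2}=\emptyset$ does \emph{not} force $s\ne\sigma s$ as closed points of $X$. A closed point can be set-theoretically fixed while $\sigma$ acts nontrivially on its residue field (e.g.\ the closed point of $\Spec(\C)$ over $k=\R$), and such points are precisely the ones to which this lemma is applied in the proof of Theorem \ref{thm:main}, where the reduction is to $z$ set-theoretically fixed with free scheme-theoretic action. So $S\cup\sigma S$ need not have $2|S|$ elements, and ``$f_0(s)\ne f_0(\sigma s)$'' is not meaningful at the level of closed points. Your construction still works, but the correct justification is geometric: since formation of fixed points commutes with field extension, $s\notin X^{C_2}$ means that after base change to $\bar k$ every geometric point $\bar s$ over $s$ satisfies $\bar s\ne\sigma\bar s$; the \emph{universal} injectivity furnished by Lemma \ref{lemm:functions} on $C_2S$ then gives $f_0(\bar s)\ne f_0(\sigma\bar s)$, hence $f(\bar s)\ne 0$, hence $f$ does not vanish at $s$. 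This is exactly how the paper argues (``we may base change to the algebraic closure''), and you should replace the cardinality/distinctness remarks in (2) and the phrase ``distinguishing the points of $S\cup\sigma S$'' in (3) by this geometric-point argument; with that emendation the proof is correct.
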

\begin{proof}
(1) Apply Lemma \ref{lemm:functions} to $X/C_2$ and $S/C_2 \cup \{x\}$.

(2) Apply Lemma \ref{lemm:functions} to $X$ and $C_2 S$ to obtain a (non-equivariant) function $f_0: X \to \A^1$.
We claim that $f=f_0 - f_0\sigma$ has the desired property, where $\sigma \in C_2$ denotes the generator.
By construction $f$ is an equivariant map to $\A^\sigma$.
To check that $f(s) \ne 0$ for $s \in S$ we may base change to the algebraic closure.
Now by assumption $s \ne \sigma s$, and hence $f(s) = f_0(s) - f_0(\sigma s)$ is non-vanishing by construction of $f_0$.

(3) Choose a (non-equivariant) embedding $X \hookrightarrow \A^N$ and consider the space of (non-equivariant) polynomials $P$ of degree $\le n$ such that $P$ vanishes on $T$.
Here $n$ is the sum of the geometric numbers of points in $S$ and $T$.
The subspace of polynomials such that $P-\sigma P$ does not vanish on $S$ is open; it will suffice to show it is non-empty.
This we may do after geometric base change.
Now $S \cup \sigma S = S' \amalg \sigma S'$.
Set $T' = T \cup \sigma S'$.
Lemma \ref{lemm:functions} yields $P$ vanishing on $T'$ and non-vanishing on $S'$.
This has the desired property.
\end{proof}

\begin{proof}[Proof of Theorem \ref{thm:main}.]
\textbf{First reductions.}
If the action of $C_2$ on $X$ is trivial (in some étale neighborhood of $z$), then we may appeal to the original Gabber lemma.
Hence we may assume that the action of $C_2$ is non-trivial.
Moreover, we may assume that $X$ is affine \cite[Remark 3.10]{hoyois-equivariant}.

We shall treat two cases, each in several steps.

\textbf{Case $z \in X^{C_2}$.}
Pick a closed specialization $\bar z \in X^{C_2}$ of $z$.
We can decompose the tangent space $T_{\bar z} X$ as a sum of trivial and sign representations, and then write $T_z X \wequi k(\bar z)^n \oplus k(\bar z)^{(m+1)\sigma}$.
If the action of $C_2$ is trivial on $T_{\bar z} X$, then it is trivial on an open neighborhood\footnote{Let $X = Spec(A)$ and $\bar z$ correspond to $m$. Write $A_\pm$ for the eigenspaces of the action. Then $A_+ \to A$ is finite. If the action on $m/m^2$ is trivial, then $(m_m)_-/((m_m)_+ (m_m)_-) = 0$, and hence $(m_m)_- = 0$ by Nakayama's lemma applied to the finite $(A_m)_+$-module $(m_m)_-$. It follows that $A_m = (A_m)_+$.} of $\bar z$, which we have excluded.
Thus $m \ge 0$.

\textbf{Step 1:} We claim that there exist maps $f_1, \dots, f_n: X \to \A^1$ and $g_1, \dots, g_m: X \to \A^\sigma$ such that if $(f,g): X \to \A^{n+m\sigma}$ denotes the induced map, then $(f,g)^{-1}((f,g)(\bar z)) \cap Z$ has dimension $0$.
We first find $f_1, \dots, f_n$ such that $\dim f^{-1}(f(\bar z)) \cap Z \le \dim Z - n$ and $\dim f^{-1}(f(\bar z)) \cap Z^{C_2} = 0$, where $f:X\to\A^n$ is the induced map.
Note that $\dim Z^{C_2} \le \dim X^{C_2} = n$.
Working inductively\NB{is this really legit?}\footnote{Observe that $(f,g)^{-1}((f,g)(\bar z)) \subset \cap_i f_i^{-1}(f_i(\bar z)) \cap \cap_j g_j^{-1}(g_j(\bar z))$.}, it will be enough to find $f_1$ such that $\dim f_1^{-1}(f_1(\bar z)) \cap Z < \dim Z$ and $\dim f_1^{-1}(f_1(\bar z)) \cap Z^{C_2} < \dim Z^{C_2}$.
Choosing one point in each component of positive dimension of $Z$ and $Z^{C_2}$ we can use Lemma \ref{lemm:functionsC2}(1) to find $f_1$ such that $f_1^{-1}(f_1(\bar z))$ does not completely contain any component of positive dimension of $Z$ or $Z^{C_2}$, which implies what we need.
Next note that $\dim Z \le \dim X -1 = n+m$, so $\dim f^{-1}(f(\bar z)) \cap Z \le m$.
Each component of $f^{-1}(f(\bar z)) \cap Z$ of positive dimension contains a point outside $Z^{C_2}$ (since $\dim Z(f_1, \dots, f_n) \cap Z^{C_2} = 0$).
Hence we can use Lemma \ref{lemm:functionsC2}(2) to find $g_1$ with $\dim (f,g_1)^{-1}((f,g_1)(\bar z)) \cap Z \le m-1$.
Iterating this procedure we obtain what we need.

\textbf{Step 2:} Let $p_1, \dots, p_N \in \scr O_X(X)$ generate $\scr O_X(X)$.
By symmetrizing and anti-symmetrizing, we can assume that $\sigma p_i = \pm p_i$, where $\sigma \in C_2$ denotes the generator (recall that $k$ has characteristic not $2$).
We further add the functions from step (1) to this set.
This way we obtain an equivariant closed immersion $X \hookrightarrow \A^{N + M\sigma}$.

\textbf{Step 3:} We claim that a general equivariant linear projection $\varphi: \A^{N + M\sigma} \to \A^{n+m\sigma} \times \A^\sigma$ has the following properties:
\begin{enumerate}[(a)]
\item The composite $Z \to \A^{N+M\sigma} \to \A^{n+m\sigma} \times \A^\sigma \to \A^{n+m\sigma}$ is quasi-finite at $\bar z$.
\item The composite $X \to \A^{N+M\sigma} \to \A^{n+m\sigma} \times \A^\sigma$ is étale at $\bar z$.
\end{enumerate}
Indeed both conditions are open\footnote{Note that spaces of equivariant maps are fixed points on spaces of all maps, so it suffices to show the openness ignoring equivariance. For étaleness see e.g. \cite[\S3.2.2]{kai-moving-lemma}. For the quasifiniteness, apply openness of the quasi-finite locus \cite[Tag 01TI]{stacks-project} to the universal projection $Z \times P \to \A^{n+m\sigma} \times P$.}, so we need only check that the sets are non-empty.
For (a), this holds because among the coordinates are the functions $f_1, \dots, f_n, g_1, \dots, g_m$ of step (1).
For (b), we may first base change to an algebraic closure of $k$.
Doing so splits $\bar z$ into finitely many points; it will suffice to show that being étale at each one is an open non-empty condition.
Thus we may assume that $\bar z$ is a rational point.
Now the claim follows using \cite[17.11.1]{EGAIV}, because the target is isomorphic to $T_z X$ and the linear inclusion $T_z X \to k^{N+M\sigma}$ equivariantly splits ($k$ having characteristic $\ne 2$).

\textbf{Step 4:} Choose a projection as in step (3) (this is possible since $k$ is infinite).
We obtain a map $\varphi: X \to \A^{n+m\sigma} \times \A^\sigma$ which is étale at $\bar z$ and such that $Z \to \A^{n+m\sigma}$ is quasi-finite at $\bar z$.
Since the étale and quasi-finite loci are open \cite[Tags 02GI and 01TI]{stacks-project}, shrinking $X$ we may assume $\varphi$ is étale and quasi-finite.
Let $W \subset X$ denote the vanishing locus of $g_{m+1}: X \to \A^\sigma$.
This is a closed subscheme étale over $\A^{n+m\sigma}$, whence smooth.
Moreover $X^{C_2} \subset W$, so $z \in W$.
Consider the pullback squares
\begin{equation*}
\begin{CD}
X_1 @>>> W \times \A^\sigma @>>> W \\
@VVV           @VVV                  @VVV \\
X @>>> \A^{n+m\sigma} \times \A^\sigma @>>> \A^{n+m\sigma}.
\end{CD}
\end{equation*}
We have $W \hookrightarrow X$ and so $W \times_{\A^{n+m\sigma}} W \hookrightarrow X_1$.
The diagonal $W \to W \times_{\A^{n+m\sigma}} W$ is open (both schemes being étale over $\A^{n+m\sigma}$) and closed ($W \to \A^{n+m\sigma}$ being separated); hence $W \times_{\A^{n+m\sigma}} W = W \amalg W'$.
Set $X_1' = X \setminus W'$ and consider the pullback squares
\begin{equation*}
\begin{CD}
W_z^h @>>> X_2 @>>> W_z^h \times \A^\sigma @>>> W_z^h \\
@VVV     @VVV         @VVV                      @VVV \\
W @>>> X_1' @>>> W \times \A^\sigma @>>> W.
\end{CD}
\end{equation*}
Since $W_z^h$ lifts to $X_2$ as indicated, $z$ lifts to $X_2$.
Because $Z \to \A^{n+m\sigma}$ is quasi-finite at $z$ so is $Z_{X_2} \to W_z^h$.
We can write \cite[Tag 04GJ]{stacks-project} \[ Z_{X_2} = A_1 \amalg \dots \amalg A_n \amalg B\] with $A_i$ local and finite over $W_z^h$ and $B \to W_1$ not quasi-finite at any point over $z$.
It follows that $z \in A_i$ for some $i$, say $i=1$.
Set \[ X_3 = X_2 \setminus (\bigcup_{i >1} A_i \cup B). \]
Then $Z_{X_3} \to W_z^h$ is finite and the composite $Z_{X_3} \to X_3 \to W_z^h \times \A^\sigma$ is unramified and finite.
Since $Z_{X_3} (\wequi A_1)$ has fiber over $z$ consisting only of $z$, the fiber of $Z_{X_3} \to W_z^h \times \A^\sigma$ over $z$ is a closed immersion (use \cite[Tag 04XV]{stacks-project}), and hence so is $Z_{X_3} \to W_z^h \times \A^\sigma$ by Nakayama's lemma.

\textbf{Step 5:}
Note that for $i>1$, $W_z^h \cap A_i \subset W_z^h \subset X_2$ is closed but does not contain $z$, so must be empty.
Similarly for $W_z^h \cap B$.
It follows that $W_z^h \subset X_3$.
The map $Z_{X_3} \to \varphi^{-1}(\varphi(Z_{X_3}))$ is open (both schemes being étale over $\varphi(Z_{X_3}) \wequi Z_{X_3}$) and closed (both schemes being closed in $X_3$).
Hence $\varphi^{-1}(\varphi(Z_{X_3})) \wequi Z_{X_3} \amalg Z'$.
As before $z \not\in Z'$ and so $Z' \cap W_z^h = \emptyset$.
Thus $W_z^h \subset X_4 := X_3 \setminus Z'$.
By construction, $X_4 \to \A^\sigma \times W_z^h$ is an étale neighborhood of $Z_{X_4}$.
The composite $W_z^h \to X_4 \to \A^\sigma \times W_z^h$ consists of finitiely presented $W_z^h$-schemes.
Writing $W_z^h$ as the cofiltered limit of étale neighborhoods of $z$ in $W$, we conclude by continuity that there exists such an étale neighborhood $\tilde W \to W$ together with a model $\tilde W \to \tilde X_4 \to \A^\sigma \times \tilde W$ of the previous composite, still satisfying all the same properties (in particular $\tilde X_4 \to \A^\sigma \times \tilde W$ is an étale neighborhood of $Z_{\tilde X_4}$, and $\tilde X_4$ is an open subscheme of $X_1' \times_W \tilde W$).
Since $\tilde W \to W$ and $X_1' \to X$ are étale neighborhoods of $z$, so is $\tilde X_4 \to X$.

This concludes the case where $z \in X^{C_2}$.

\textbf{Case $z \not\in X^{C_2}$.}
Replacing $X$ by $X \setminus X^{C_2}$, we may assume that the action of $C_2$ is free.

\textbf{Trivial subcase.}
Suppose there is a smooth map $X \to B$, where $B$ is a smooth $0$-dimensional $k$-scheme with a free action by $C_2$.
In this case everything happening equivariantly over $B$ is equivalent to things happening non-equivariantly over $B/C_2$\NB{ref}, and so the result follows by appeal to the usual Gabber lemma.

\textbf{Reduction to $z$ set-theoretically fixed.}
Suppose that $\sigma z \ne z$.
Then the map $C_2 \times X \to X$ is an equivariant étale neighborhood of $z$ (since $C_2 z \wequi C_2 \times z$).
We may thus replace $X$ by $C_2 \times X$, and consequently assume that $X$ has a smooth map to $C_2$.
This is handled by the trivial subcase.
Consequently from now on we may assume that $z$ is set-theoretically fixed.

\textbf{Reduction to $\dim X > 1$.}
Existence of $z \in Z$ precludes $\dim X = 0$.
Hence we may assume $\dim X = 1$, so $z \in X$ is a closed point.
Since $C_2$ acts non-trivially on $k(z)$, the separable degree of $k(z)/k$ is positive (this follows from \cite[Tag 09HK]{stacks-project}\NB{$[k(z):k]_s = [k(z):k(z)^{C_2}]_s [k(z)^{C_2}:k]_s > 1$}).
Let $k_s \subset k(z)$ denote the maximal separable subextension, and set $B = Spec(k_s)$.
(This has a canonical $C_2$-action.)
Since $B \to B \times B$ is a clopen immersion, $X_B \to X$ contains an equivariant étale neighborhood of $z$.
If the action of $C_2$ on $B$ is non-trivial, we are done by the trivial subcase.
Otherwise we can repeat the construction, with the base field replaced by $k_s$.
Note that this construction has decreased the degree of $k(z)/k$, whence the process must terminate.

\textbf{Step 0:}
To summarize, we have $X$ with a free action by $C_2$, $z \in Z \subset X$ with $C_2 z = \{z\}$, $\dim X = n+1$, $n > 0$.
Let $\bar z$ be a closed specialization of $z$.

\textbf{Step 1:}
We claim that there is a map $\varphi: X \to \A^{n\sigma}$ which is smooth at $\bar z$, its restriction to $Z$ is quasi-finite at $\bar z$, and has $\varphi(\bar z) \ne 0$.
After choosing an open embedding of $X$ into a $C_2$-representation and considering linear projections, all three conditions are open.
Hence we need only prove that they can be satisfied.
By adding further generators to the embedding, we in fact just need to construct functions $X \to \A^{n\sigma}$ satisfying the smoothness/quasifiniteness/non-vanishing condition.

For the non-vanishing, we can use Lemma \ref{lemm:functionsC2}(2).

For the quasi-finiteness, it will be enough to prove that given $Z' \subset X$ closed of dimension $\le d$ ($d \ge 1$) with $\bar z \in Z'$, there exists $f: X \to \A^\sigma$ vanishing at $\bar z$ with $Z' \cap Z(f)$ of dimension $<d$.
In every component of $Z'$ of dimension $d$ we can find a closed point not conjugate to $\bar z$; call the resulting set $S$.
We thus need $f(\bar z) = 0$ and $f(S) \not\ni 0$.
This we find by Lemma \ref{lemm:functionsC2}(3).

For the smoothness, first choose finitely many generators of $m_{\bar z}$, and write $V$ for the affine space spanned by them.
We claim that if $f_1, \dots, f_n$ are general elements of $V$, then $\varphi = (f_1 - \sigma f_1, \dots, f_n - \sigma f_n)$ has the desired property.
We may verify this after base change to the algebraic closure.
Then $\bar z$ splits into finitely many free $C_2$ orbits $C_2 z_1, \dots, C_2 z_r$.
It will suffice to show that a general element is smooth at $C_2 z_1$.
The condition being open, we need only exhibit one specific set of maps $f_1, \dots, f_n$ satisfying the smoothness condition at $C_2 z_1$.
The map $V \to \Omega_{z_1} X \oplus \Omega_{\sigma z_1} X$ is surjective (the right hand side being a quotient of $(m_{\bar z}/m_{\bar z}^2) \otimes_k \bar k$).
Let $e_1, \dots, e_{n+1}$ be a basis of $\Omega_{z_1} X$.
Pick $f_i$ lifting $(e_i,0)$ to get the desired result (note that the action of $\sigma$ interchanges $(e_i, 0)$ and $(0, e_i)$) using \cite[17.11.1]{EGAIV}.

\textbf{Step 2:}
Shrinking $X$, we may thus assume given $\varphi: X \to \A^{n\sigma}$ which is smooth, and quasi-finite on $Z$, with $w := \varphi(z) \ne 0$.
Note that $w$ is set-theoretically fixed but has non-trivial action.
Let $W = (\A^{n\sigma})_w^h$ and $X_1 = X \times_{\A^{n\sigma}} W$.
Note that $z$ lifts to $X_1$.
Also $Z_{X_1} \to W$ is quasi-finite and hence as before we can obtain $X_2$ by removing finitely many clopen subsets of $Z_{X_1}$ in such a way that $z \in X_2$ and $Z_{X_2} \to W$ is finite; in fact $Z_{X_2}$ is local with closed point $z$ and so $z$ is the only point in $Z_{X_1}$ above $w$ (use incomparability in finite extensions \cite[Tag 00GT]{stacks-project}).
We claim that there is a map $X \to \A^1_W$ (trivial action on $\A^1$) such that the special fiber $X_w \to \A^1_w$ is quasi-finite at $z$, étale at $z$ and geometrically injective at $z$.
Assuming this, the map is flat at $z$ by miracle flatness \cite[Tag 00R4]{stacks-project}, hence étale at $z$ \cite[Tag 02GU]{stacks-project}.
Let $X_3$ be obtained by shrinking $X_2$ such that the map becomes étale.
Since $Z_{X_2}$ is local, we must have $Z_{X_2} \subset X_3$ and so $Z_{X_3} = Z_{X_2} \to W$ is still finite.
It follows as in Step 4 before (i.e. using Nakayama's lemma and \cite[Tag 04XV]{stacks-project}) that $Z_{X_3} \to \A^1_W$ is a closed immersion.

\textbf{Proof of claim:}
Suppose $f_w: X_w \to \A^1_w$ has the desired properties.
Let $f: X \to \A^1_W$ be any (non-equivariant) lift of $f_w$ (this exists because $f$ corresponds to an element of $\scr O(X)$ with specfied image in $\scr O(X_w)$, and $X_w \to X$ is a closed immersion of affine schemes).
The antisymmetrization of $f$ will have the desired properties.
We thus reduce to the case $W=w$.
We are thus given $X$ smooth of dimension $1$ over $w$ with a specified closed point $z$ and must construct a map to $\A^1_w$ which is quasi-finite, étale and geometrically injective at $z$.
Since $w$ has a free action, by descent we can instead consider the non-equivariant situation over $w/C_2$.
Thus given a smooth affine curve $X$ over a field $v$ with a closed point $z$, we must construct a map $X \to \A^1$ which is quasi-finite, étale and geometrically injective at $z$.
Choosing an embedding $X \hookrightarrow \A^N_v$ and considering linear projections to $\A^1$, all three conditions become open.
Being quasi-finite just requires being non-constant (at $z$), which can clearly be satisfied.
The universal injectivity can be satisfied by Lemma \ref{lemm:functions}.
Generical étaleness is also well-known using arguments as in smoothness for step 1; see e.g. \cite[\S3.2.2]{kai-moving-lemma}.

\textbf{Step 3:}
As in step 5 from case 1, we remove $\varphi^{-1}(\varphi(Z_{X_3})) \setminus Z_{X_3}$ from $X_3$ in order to obtain an étale neighborhood $X_4 \to \A^1_W$ of $Z_{X_4} = Z_{X_3}$.
These are finitely presented schemes over $W$, which itself is the cofiltered limit of étale neighborhoods of $w \in \A^{n\sigma}$.
It follows that there exists an étale neighborhood $\tilde W \to \A^{n\sigma}$ of $w$ together with a model $\tilde X_4 \to \A^1_{\tilde W}$ having the same properties as before.
Since $\tilde W \to \A^{n\sigma}$ is an étale neighborhood of $W$, $\tilde X_4 \to X$ is an étale neighborhood of $z$.

This concludes the proof.
\end{proof}

\section{Applications}
Throughout let $k$ be an infinite field of characteristic not $2$.
Denote by \[ \SH^{C_2,S^1}(k) \subset \Fun((\Sm^{C_2}_k)^\op, \SH) \] the category of $\A^1$-invariant Nisnevich sheaves of spectra.

\begin{lemma} \label{lemm:inj}
Let $E \in \SH^{C_2,S^1}(k)$, $X \in \Sm^{C_2}_k$ and $x \in X$.
Write $S$ for the generic orbit of $X_{C_2 x}^h$ and $S'$ for the generic points of $(X_{C_2 x}^h)^{C_2}$ (of which there are $0$ or $1$).
The map \[ \pi_0E(X_{C_2 x}^h) \to \pi_0 E(S) \oplus \pi_0 E(S') \] is injective.
\end{lemma}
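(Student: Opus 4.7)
The plan is to apply the equivariant Gabber lemma (Theorem~\ref{thm:main}) together with Nisnevich descent and $\A^1$-invariance of $E$, proceeding by induction on $\dim X$ with the base case $\dim X = 0$ being immediate (the map $S \to X^h_{C_2 x}$ is then an isomorphism). Given $\alpha \in \pi_0 E(X^h_{C_2 x})$ with $\alpha|_S = 0$ and $\alpha|_{S'} = 0$, I would first descend to an étale neighborhood $X_0 \to X$ of $C_2 x$ and use the vanishings to extract a closed invariant subscheme $Z \subset X_0$ of positive codimension with $Z \cap X_0^{C_2}$ of positive codimension in $X_0^{C_2}$, such that $\alpha|_{X_0 \setminus Z} = 0$. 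Combining the two stalk-level vanishings into a single one requires some care since $\pi_0 E$ is not a priori a Nisnevich sheaf, but this can be handled via iterated Mayer--Vietoris and the inductive hypothesis on $\dim X$.

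Apply Theorem~\ref{thm:main} to $(X_0, Z, x)$ to obtain an étale neighborhood $X' \to X_0$ of $C_2 x$ and an equivariant étale neighborhood $\varphi: X' \to \A^1_W$ of $Z_{X'}$, where $\A^1$ carries either the trivial or the sign action and $Z_{X'} \to W$ is finite. Nisnevich descent on the resulting distinguished square yields a fiber sequence
\[ E(\A^1_W) \to E(\A^1_W \setminus Z_{X'}) \oplus E(X') \to E(X' \setminus Z_{X'}), \]
from which $\alpha$ lifts to $\tilde\alpha \in \pi_0 E(\A^1_W)$ with $\varphi^* \tilde\alpha = \alpha$ and $\tilde\alpha|_{\A^1_W \setminus Z_{X'}} = 0$.

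In the trivial action case, since $k$ is infinite and $Z_{X'} \to W$ is finite, one can pick $c \in k$ so that the equivariant constant section $s_c: W \to \A^1_W$, $w \mapsto (c,w)$, avoids $Z_{X'}$ over the henselization of $W$ at the image of $x$. Then $s_c^* \tilde\alpha = 0$, and $\A^1$-invariance forces $\tilde\alpha = 0$, whence $\alpha = 0$.

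In the sign action case, the key additional tool is the equivariant scalar-multiplication map $H: \A^1 \times \A^\sigma_W \to \A^\sigma_W$, $(s,t,w) \mapsto (st,w)$, which is equivariant because the first factor carries the trivial action. Applying $\A^1$-invariance to $H^* \tilde\alpha$ yields $\tilde\alpha = \pi^* \beta$, where $\beta := i_0^* \tilde\alpha \in \pi_0 E(W)$ is the zero-section pullback along $i_0: W \hookrightarrow \A^\sigma_W$; hence $\alpha = q^* \beta$ with $q := \pi \circ \varphi: X' \to W$. To apply the inductive hypothesis to $(W, q(x))$ --- noting $\dim W = \dim X - 1$ and $q(x) \in W^{C_2}$ --- I would check that $\beta$ vanishes on the generic orbit and on the generic fixed point of $W^h_{q(x)}$. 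Both reduce to showing that the zero-section image of the relevant generic point lies outside $Z_{X'} \subset \A^\sigma_W$: the generic orbit case uses $Z_{X'} \not\supset W$ (else $Z \supset X^{C_2}$ near $x$, contradicting the positive-codimension condition on $Z \cap X^{C_2}$), while the generic fixed point case uses that $(Z_{X'})^{C_2}$ has positive codimension in $(\A^\sigma_W)^{C_2} = W^{C_2}$. The induction then yields $\beta = 0$ and hence $\alpha = 0$. I expect the main obstacle to be this sign-action case, specifically the delicate bookkeeping required to combine the vanishings on $S$ and $S'$ into a single closed subscheme $Z$ while keeping control of the fixed-locus dimensions so that the induction applies.
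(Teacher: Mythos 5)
There is a genuine gap, and it sits exactly at the step you wave off with ``iterated Mayer--Vietoris and the inductive hypothesis''. From $\alpha|_S=0$ you can indeed spread out to an invariant closed $Z\subset X_0$ of positive codimension with $\alpha|_{X_0\setminus Z}=0$; but from $\alpha|_{S'}=0$ you only get vanishing of the \emph{restriction} of $\alpha$ to a dense open of the fixed locus, and vanishing on a closed subscheme does not propagate to vanishing on an invariant open neighbourhood of it in $X_0$. So there is in general no single $Z$ with $\alpha|_{X_0\setminus Z}=0$ \emph{and} $Z\cap X_0^{C_2}$ of positive codimension in $X_0^{C_2}$: the problematic classes are precisely those ``supported on the fixed locus'', for which any admissible $Z$ must contain $X_0^{C_2}$. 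This unavailable codimension condition is then what your sign-action case leans on twice: for the generic orbit (to get $Z_{X'}\not\supset$ the zero section) and, more seriously, for the generic fixed point, where you need $(Z_{X'})^{C_2}\subsetneq W^{C_2}$. The generic-orbit step is repairable without it (over the free generic orbit of $W$ the fibre is a punctured line over a field, so some section, not necessarily the zero section, misses the finite set $Z_{X'}$); but at a fixed point of $W$ every equivariant section of $\A^\sigma$ must pass through $0$, so if $Z_{X'}$ contains that point the vanishing $\tilde\alpha|_{\A^\sigma_W\setminus Z_{X'}}=0$ gives you nothing, and no choice of section can save you.

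The paper's proof avoids this by never merging the two vanishings into one $Z$. It first upgrades $\alpha|_{S'}=0$ to $\alpha|_{Y^{C_2}}=0$ using \emph{non-equivariant} Gersten injectivity on the henselian local scheme $Y^{C_2}$ (the action there is trivial), and carries this as a separate condition $b|_{X_1^{C_2}}=0$ alongside $b|_{X_1\setminus Z}=0$. Then, in the sign-action case, it uses the section $W\to X'$ provided by part (2) of Theorem \ref{thm:main} --- which your argument records but never uses --- to transport the fixed-locus vanishing of $b$ into the vanishing of the class $\beta\in\pi_0E(W)$ on $W^{C_2}$, which is exactly what the induction on $W$ needs at the generic fixed point. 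Your outer scaffolding (induction on dimension, spreading out, the Nisnevich square for the étale neighbourhood, $\A^1$-invariance via a constant section in the trivial case and the scaling homotopy $\A^1\times\A^\sigma_W\to\A^\sigma_W$ in the sign case) matches the paper; what is missing is the Bloch--Ogus--Gabber input on the fixed locus and the use of the section from Theorem \ref{thm:main}(2), without which the sign-action case does not close.
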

\begin{proof}
We prove the result by induction on $\dim X$.
The case of dimension $0$ is tautological.
Set $Y=X_{C_2 x}^h$.
Let $a \in \pi_0 E(Y)$ with $a|_{S} = 0$ and $a|_{S'} = 0$.
Since the action on $X^{C_2}$ is trivial, $\pi_0 E(Y^{C_2}) \to \pi_0 E(S')$ is injective (by non-equivariant Gersten injectivity; see e.g. \cite[Corollary 6.2.4]{bloch-ogus-gabber}); hence $a|_{Y^{C_2}} = 0$.
By continuity we find an étale neighborhood $X_1 \to X$ of $C_2 x$, a closed subscheme $Z \subset X_1$ and a class $b \in \pi_0E(X_1)$ such that $b|_Y = a$, $b|_{X_1 \setminus Z} =0$ and $b|_{X_1^{C_2}}=0$.
Applying Theorem \ref{thm:main} we obtain a further étale neighborhood $X_2 \to X_1$ of $C_2x$, a smooth $C_2$-scheme $W$ and an étale neighborhood $X_2 \to \A^1_W$ of $Z_{X_2}$.
Moreover if the action on $\A^1$ is non-trivial, we obtain a lift of $W$ into $X_2$.
We pull back the situation to the henselization of $W$ in the image of $C_2 x$; hence we may assume $W$ henselian in the image of $C_2 x$.
Set $Z_2 = Z_{X_2}$ and $F=\pi_0 E$.
From this we obtain the following commutative diagram with exact rows
\begin{equation*}
\begin{CD}
@.                     F(W) \\
    @.                  @A?AA \\
F(X_2 \setminus Z_2) @<<< F(X_2) @<<< F(X_2/X_2 \setminus Z_2) \\
@AAA                   @AAA           @A{\wequi}AA \\
F(\A^1_W \setminus Z_2) @<<< F(\A^1_W) @<<< F(\A^1_W/\A^1_W \setminus Z_2).
\end{CD}
\end{equation*}
The map labelled $?$ only exists if the action on $\A^1$ is non-trivial.
From the diagram we conclude that there is a class $c \in F(\A^1_W)$ with $c|_{X_2} = b|_{X_2}$ (so $c|_Y = a$) and $c|_{\A^1_W \setminus Z_2} =0$.
It will suffice to show that $c=0$.

First consider the case where the action on $\A^1$ is trivial.
In this case the map $\A^1_W \setminus Z_2 \to W$ has a section.
(To see this, note that an invariant function on $W$ defines a section missing $Z_2$ if and only if this is true in the special fibers ($Z_2$ being finite over $W$).
Since invariant functions can be lifted along closed immersions, to prove the claim we may replace $w$ by its special orbit (the image of $C_2 x$).
This special orbit has either trivial or free action, and both cases are easily dealt with.)
This implies that $F(W) \wequi F(\A^1_W) \to F(\A^1_W \setminus Z_2)$ is injective and hence $c=0$, as needed.

Now we consider the case where the action on $\A^1$ is non-trivial; in particular we are given a lift of $W$ into $X_2$ and the map $?$ exists.
By assumption $b|_{X_2^{C_2}} = 0$ and hence also $c|_{W^{C_2}} = 0$.
If the action of $C_2$ on $W$ is trivial then $c=0$ (since $F(\A^1_W) \wequi F(W)$).
Otherwise the action of $C_2$ on the generic orbit $C_2 \eta \in W$ must be non-trivial, so free.
Since $Z_2 \to W$ is finite, the map $\A^1_{C_2\eta} \setminus Z_2 \to C_2 \eta$ has a section.
(Since $C_2 \eta$ has free action, this problem corresponds to one over $(C_2 \eta)/\eta$, and even though $\A^1$ has the sign action, the quotient corresponds to a line bundle on $(C_2\eta)/\eta$ which must be trivial.)
This implies that $c|_{C_2 \eta} = 0$, and hence $c=0$ by induction.\NB{$\dim W < \dim X$!}
\end{proof}

We arrive at the main point.
Write $\Shv_\Nis(\Sm^{C_2}_k, \SH)$ for the category of spectral Nisnevich sheaves on $\Sm_k^{C_2}$.
Recall that, as the stabilization of an $\infty$-topos, this has a standard $t$-structure with heart the sheaves of abelian groups on $\Sm_k^{C_2}$ \cite[Proposition 1.3.2.1]{lurie-sag}.
In fact for $E \in \Shv_\Nis(\Sm^{C_2}_k, \SH)$ set $\ul\pi_i E = a_\Nis \pi_i E$.
Then the non-negative/non-positive spectral presheaves are detected by the evident vanishing of homotopy sheaves $\ul\pi_i$, and the functor $\ul\pi_0$ is an equivalence when restricted to the heart.
We call a Nisnevich sheaf of abelian groups on $\Sm_k^{C_2}$ \emph{strictly $\A^1$-invariant} if all of its cohomology is $\A^1$-invariant; equivalently, the associated Eilenberg--MacLane spectrum is $\A^1$-invariant.
\begin{proposition}
Let $k$ be an infinite field of characteristic $\ne 2$.
\begin{enumerate}
\item Let $E \in \SH^{C_2,S^1}(k)$.
  Then $\ul\pi_i E = 0$ if and only if, for every essentially smooth $C_2$-scheme $O$ over $k$ of dimension $0$ (i.e. a finite disjoint union of spectra of finitely generated separable field extensions of $k$, with some action by $C_2$), we have $\ul\pi_i E(O) = 0$.
\item Let $E \in \Shv_\Nis(\Sm^{C_2}_k, \SH)_{\ge 0}$.
  Then $L_\mot E \in \Shv_\Nis(\Sm^{C_2}_k, \SH)_{\ge 0}$.
\item The standard $t$-structure on $\Shv_\Nis(\Sm^{C_2}_k, \SH)_{\ge 0}$ restricts to the subcategory $\SH^{C_2,S^1}(k)$.
  The heart is the category of strictly $\A^1$-invariant sheaves on $\Sm_k^{C_2}$.
\end{enumerate}
\end{proposition}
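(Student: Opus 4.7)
The plan is to deduce (1) directly from Lemma \ref{lemm:inj}, establish (2) as an equivariant version of Morel's stable $\A^1$-connectivity theorem, and obtain (3) from (2) via the standard Bousfield-localization framework for $t$-structures.

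For (1), the forward implication is immediate, since every $0$-dimensional essentially smooth $O$ is equivariantly Nisnevich local and hence a finite disjoint union of equivariant Nisnevich stalks. Conversely, $\ul\pi_i E = 0$ is equivalent to the vanishing of $\pi_i E(X^h_{C_2 x})$ for every equivariant Nisnevich stalk. Shifting $E$ by $-i$ reduces to the case $i=0$, and Lemma \ref{lemm:inj} then provides an injection $\pi_0 E(X^h_{C_2 x}) \hookrightarrow \pi_0 E(S) \oplus \pi_0 E(S')$ in which $S$ and $S'$ are themselves $0$-dimensional essentially smooth $C_2$-schemes, both vanishing by hypothesis.

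For (2), I would write $L_\mot$ as the iterated construction $\colim_n (L_\Nis \operatorname{Sing}^{\A^1})^n$ and argue that each constituent preserves stalkwise connectivity. Nisnevich sheafification does, because $\Sm_k^{C_2}$ has finite Nisnevich cohomological dimension bounded in terms of Krull dimension. For $\operatorname{Sing}^{\A^1}$, part (1) reduces the connectivity check to $0$-dimensional essentially smooth $C_2$-schemes $O$; on such $O$, which are equivariantly Nisnevich local, the realization $|E(\A^\bullet_O)|$ is analyzed via the Nisnevich descent spectral sequence combined with Galois descent, reducing to Morel's non-equivariant theorem when $C_2$ acts freely on $O$, or by decomposing $\A^1$ into its trivial and sign components when the action on $O$ is trivial. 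The main obstacle is the interaction between geometric realization and the descent spectral sequence: controlling negative homotopy requires the full strength of Lemma \ref{lemm:inj}, applied now to the already $\A^1$-invariant sheaf $L_\mot E$.

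For (3), given (2) the standard Bousfield-localization framework yields a $t$-structure on $\SH^{C_2, S^1}(k)$ whose connective part is $\SH^{C_2, S^1}(k) \cap \Shv_\Nis(\Sm^{C_2}_k, \SH)_{\ge 0}$. To verify that this coincides with the restriction of the standard $t$-structure on $\Shv_\Nis$, one checks that each $\ul\pi_i E$ of an $\A^1$-invariant $E$ is itself strictly $\A^1$-invariant, which follows by iterating (1) and (2) along the lines of Morel's treatment of the non-equivariant case. An object of the heart is then an $\A^1$-invariant Eilenberg--MacLane sheaf $HA$, where $A$ is a Nisnevich sheaf of abelian groups on $\Sm_k^{C_2}$; and such $HA$ is $\A^1$-invariant as a sheaf of spectra exactly when $H^n_\Nis(X, A) \to H^n_\Nis(\A^1_X, A)$ is an isomorphism for every $n \ge 0$ and every $X \in \Sm_k^{C_2}$---precisely the condition of strict $\A^1$-invariance of $A$.
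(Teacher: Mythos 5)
Your part (1) is the paper's argument, and your part (3) is morally the standard deduction (the paper makes it precise by a retract argument: for $E$ motivically local, $\tau^{\Nis}_{\ge 0}E \to E$ factors through $L_\mot\tau^{\Nis}_{\ge 0}E$, which is connective by (2), so $\tau^{\Nis}_{\ge 0}E$ is a retract of a motivically local object and hence lies in $\SH^{C_2,S^1}(k)$; no a priori strict $\A^1$-invariance of all homotopy sheaves is needed, since that is a consequence rather than an input). The genuine gap is in your part (2). The reduction to $0$-dimensional $O$ via part (1) is only available for objects already known to be $\A^1$-invariant Nisnevich sheaves, i.e. for $L_\mot E$ itself, not for the intermediate terms $(L_\Nis\operatorname{Sing}^{\A^1})^n E$ of your iterated colimit; and once you have reduced to showing that $(L_\mot E)(O)$ is connective, the iterated-colimit description gives you no handle on this value, because each sheafification step re-introduces sections over higher-dimensional schemes. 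The step you are missing is the identification $(L_\mot E)(O)\simeq |E(O\times\A^\bullet)|$: since the equivariant Nisnevich topology is generated by a cd structure, the realization $|E(-\times\A^\bullet)|$ of a Nisnevich sheaf of spectra is again a Nisnevich sheaf (geometric realization is a colimit and commutes with the finite limits detecting descent), and it is $\A^1$-invariant and $\A^1$-equivalent to $E$, hence it \emph{is} $L_\mot E$ --- a single application of the singular construction, with no further sheafification. Granting this, connectivity follows from one concrete input: $E(O\times\A^n)$ is $(-n)$-connective because $O\times\A^n$ has equivariant Nisnevich cohomological dimension $\le n$ \cite[Proposition A.4.4]{BKRS}, so the skeletal filtration of the realization gives connectivity.

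Your proposed substitute for this computation does not work as stated. The $\A^1$ appearing in the singular construction carries the trivial action, so there is nothing to ``decompose into trivial and sign components''; and when the action on $O$ is trivial the equivariant site over $O$ is not equivalent to a non-equivariant one (objects over $O$ may still carry nontrivial $C_2$-actions), so Morel's non-equivariant connectivity theorem does not apply to that case, while the free case would at best handle part of the problem after a base-change argument you do not supply. You correctly sense that Lemma \ref{lemm:inj} must be applied to the already $\A^1$-invariant sheaf $L_\mot E$ --- that is exactly the paper's use of part (1) --- but the remaining analysis of $|E(\A^\bullet_O)|$ via a ``descent spectral sequence combined with Galois descent'' is not carried out and, without the cd-structure observation above, you do not even know that this realization computes $(L_\mot E)(O)$. (Your claim that $L_\Nis$ preserves connectivity by finite cohomological dimension is fine in spirit, but it is not where the difficulty lies.)
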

\begin{proof}
(1) Necessity is clear, and sufficiency is immediate from Lemma \ref{lemm:inj}.

(2) We must prove that $(L_\mot E)(O)$ a connective spectrum for every $O$ as in (1).
Since $(L_\mot E)(X)$ is obtained\footnote{This singular construction is $\A^1$-equivalent to $E$ and $\A^1$-invariant \cite[\S2 Corollaries 3.5 and 3.8]{A1-homotopy-theory}, and it is a Nisnevich sheaf since the topology is defined by a cd structure; hence it is the motivic localization.} as the singular construction $|E(X \times \A^{\bullet})|$, for this it suffices to show that $E(O \times \A^n)$ is $(-n)$-connective.
This holds since $O \times \A^n$ has Nisnevich cohomological dimension $\le n$ \cite[Proposition A.4.4]{BKRS}.

(3) Let $E \in \SH^{C_2,S^1}(k)$ and write $\tau_{\ge 0}^\Nis E \in\Shv_\Nis(\Sm^{C_2}_k, \SH)_{\ge 0}$ for the connective cover.
The canonical map $\tau_{\ge 0}^\Nis E \to E$ factors through $L_\mot \tau_{\ge 0}^\Nis E$, $E$ being $\A^1$-invariant.
On the other hand, $L_\mot \tau_{\ge 0}^\Nis E$ being connective, the map $L_\mot \tau_{\ge 0}^\Nis E \to E$ factors through $\tau_{\ge 0}^\Nis E$.
We have now exhibited $\tau_{\ge 0}^\Nis E$ as a retract of the motivically local object $L_\mot \tau_{\ge 0}^\Nis E \to E$, whence $\tau_{\ge 0}^\Nis E \in \SH^{C_2,S^1}(k)$.
It follows that the standard $t$-structure restricts as claimed.
The identification of the heart follows by definition.
\end{proof}

\bibliographystyle{alpha}
\bibliography{bibliography}

\end{document}